\def\eps{\varepsilon}
\def\wt{\widetilde}
\def\E{{\mathbb{E}}}
\def\rr{\rightarrow}
\def\one{\mathds{1}}
\newcommand{\R}{{\mathbb R}}
\newcommand{\C}{{\mathbb C}}
\newcommand{\N}{{\mathbb N}}
\DeclareMathOperator*{\argmin}{arg\,min}
\newcommand{\bd}{\begin{displaymath}}
\newcommand{\ed}{\end{displaymath}}
\newcommand{\be}{\begin{equation}}
\newcommand{\ee}{\end{equation}}
\newcommand{\bq}{\begin{eqnarray}}
\newcommand{\eq}{\end{eqnarray}}
\newcommand{\bn}{\begin{eqnarray*}}
\newcommand{\en}{\end{eqnarray*}}
\newcommand{\dl}{\delta}
\newtheorem{theorem}{Theorem}[section]
\newtheorem{lemma}[theorem]{Lemma}
\newtheorem{remark}[theorem]{Remark}
\numberwithin{equation}{section}
\title{
The Mercer-Young Theorem for Matrix-Valued Kernels on Separable Metric Spaces}
\author[]{Eyal Neuman}
\author[]{Sturmius Tuschmann\thanks{ST is supported by the EPSRC Centre for Doctoral Training in Mathematics of Random \mbox{Systems}: Analysis, Modelling and Simulation (EP/S023925/1)}}
\affil[]{Department of Mathematics, Imperial College London}
\begin{document}
\maketitle

\begin{abstract} 
\noindent We generalize the characterization theorem going back to Mercer and Young, which states that a symmetric and continuous kernel is positive definite if and only if it is integrally positive definite, to matrix-valued kernels on separable metric spaces. We also demonstrate the applications of the generalized theorem to the field of convex optimization and other areas.  \vspace{0.5cm}
\end{abstract} 

\begin{description}
\item[Mathematics Subject Classification (2010/2020):] 	28A25, 43A35, 47B34
\item[Keywords:]  positive definite kernel, matrix-valued kernel, measure theory, operator theory, convex optimization
\end{description}

\section{Introduction and Main Results}
Positive definite kernels are a generalization of positive definite functions, and were first introduced by Mercer in his seminal paper \cite{mercer1909functions} from 1909. Namely, motivated by Hilbert's earlier work on Fredholm integral equations \cite{hilbert1904grundzuge}, Mercer's original goal was the characterization of all symmetric and continuous kernels $K:[a,b]\times [a,b]\to\R$ on a compact interval $[a,b]\subset\R$ which are strictly integrally positive definite with respect to the space $C([a,b],\R)$ of continuous functions on $[a,b]$, i.e., which satisfy
\be
\int_a^b\int_a^b f(x)K(x,y)f(y)dxdy>0,\textrm{ for all } f\in C([a,b],\R)\textrm{ with } f\neq 0.
\ee
However, as mentioned by Mercer in the introduction of \cite{mercer1909functions}, he quickly figured out that this condition going back to  Hilbert (see Chapter V of \cite{hilbert1904grundzuge}) was too restrictive to allow a characterization by an equivalent discrete condition on the kernel $K$. Thus, he investigated kernels $K$ which are integrally positive definite with respect to $C([a,b],\R)$,  i.e., which satisfy
\be
\int_a^b\int_a^b f(x)K(x,y)f(y)dxdy\geq 0,\textrm{ for all } f\in C([a,b],\R).
\ee
He then showed that a symmetric and continuous kernel $K:[a,b]\times [a,b]\to\R$ is integrally positive definite with respect to $C([a,b],\R)$ if and only if $K$ is a positive definite kernel, that is, $K$ satisfies
\be
    \sum_{i,j=1}^n c_i K(x_i,x_j)c_j\geq 0, \text{ for any } n\in\N, \ x_1,\dots,x_n\in[a,b],\  c_1,\dots,c_n\in\R.
\ee
Here, positive definite kernels, as they are known today, appeared for the first time in the literature, where the term 'positive definite' is now well-established in spite of the weak inequality in the condition. Just one year later, Mercer's characterization theorem was generalized by Young \cite{young1910note} to the larger function space $L^1([a,b],\R)$ of integrable functions on $[a,b]$. The results of Mercer and Young are summarized in the following theorem (see \S 10 in \cite{mercer1909functions} and \S 1 in \cite{young1910note}).
\begin{theorem}\textup{\textbf{(Mercer-Young, 1909-1910)}} \label{thm:mercer-young}
Let $[a,b]\subset\R$ be a compact interval and $K:[a,b]\times[a,b]\to\R$ be a symmetric and continuous kernel. Then the following statements are equivalent:
\begin{enumerate}[(i)]
\item $K$ is positive definite, i.e.,
    \be
    \sum_{i,j=1}^n c_i K(x_i,x_j)c_j\geq 0,\text{ for any } n\in\N, \ x_1,\dots,x_n\in[a,b],\  c_1,\dots,c_n\in\R.
    \ee
\item $K$ is integrally positive definite with respect to $C([a,b],\R)$, i.e.,
    \be
    \int_a^b\int_a^b f(x)K(x,y)f(y)dxdy\geq 0, \text{ for all }f\in C([a,b],\R).
    \ee
\item $K$ is integrally positive definite with respect to $L^1([a,b],\R)$, i.e.,
    \be
    \int_a^b\int_a^b f(x)K(x,y)f(y)dxdy\geq 0, \text{ for all }f\in L^1([a,b],\R).
    \ee
\end{enumerate}
\end{theorem}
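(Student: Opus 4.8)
The plan is to establish the cycle of implications $(iii)\Rightarrow(ii)\Rightarrow(i)\Rightarrow(iii)$, each step resting on a standard approximation device.

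The implication $(iii)\Rightarrow(ii)$ requires no work: since $[a,b]$ is compact, every $f\in C([a,b],\R)$ is bounded and hence lies in $L^1([a,b],\R)$, so the inequality in $(iii)$ specializes to the one in $(ii)$. For $(ii)\Rightarrow(i)$, I would recover the discrete condition by approximating point masses with continuous bumps. Fix $n\in\N$, points $x_1,\dots,x_n\in[a,b]$, and reals $c_1,\dots,c_n$; after collecting coefficients at coinciding points we may assume the $x_i$ are distinct. For small $\eps>0$ choose $\varphi_i^\eps\in C([a,b],\R)$ with $\varphi_i^\eps\ge0$, $\supp\varphi_i^\eps\subseteq[x_i-\eps,x_i+\eps]\cap[a,b]$, and $\int_a^b\varphi_i^\eps(x)\,dx=1$ (taking one-sided bumps when $x_i\in\{a,b\}$), and put $f^\eps:=\sum_{i=1}^n c_i\varphi_i^\eps\in C([a,b],\R)$. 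Bilinearity of the double integral gives $\int_a^b\int_a^b f^\eps(x)K(x,y)f^\eps(y)\,dx\,dy=\sum_{i,j=1}^n c_ic_j\int_a^b\int_a^b\varphi_i^\eps(x)\varphi_j^\eps(y)K(x,y)\,dx\,dy\ge0$ by $(ii)$. Because $K$ is uniformly continuous on the compact square $[a,b]^2$ and each product $\varphi_i^\eps\otimes\varphi_j^\eps$ is a probability density concentrating at $(x_i,x_j)$, the inner double integral converges to $K(x_i,x_j)$ as $\eps\downarrow0$, uniformly over the finitely many pairs $(i,j)$; passing to the limit yields $\sum_{i,j}c_ic_jK(x_i,x_j)\ge0$.

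For $(i)\Rightarrow(iii)$, I would first prove the inequality for $f\in C([a,b],\R)$ by Riemann sums, then bootstrap to $L^1$. Given continuous $f$, partition $[a,b]$ into $n$ equal subintervals with sample points $x_1^{(n)},\dots,x_n^{(n)}$ and set $c_i^{(n)}:=\tfrac{b-a}{n}f(x_i^{(n)})$; then $\sum_{i,j}c_i^{(n)}K(x_i^{(n)},x_j^{(n)})c_j^{(n)}\ge0$ by $(i)$, and since $(x,y)\mapsto f(x)K(x,y)f(y)$ is continuous on $[a,b]^2$ these double Riemann sums converge to $\int_a^b\int_a^b f(x)K(x,y)f(y)\,dx\,dy$, which is therefore nonnegative. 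For general $f\in L^1([a,b],\R)$, note first that $|f(x)K(x,y)f(y)|\le\|K\|_\infty|f(x)||f(y)|$ with $K$ bounded on the compact square, so the double integral is well defined by Fubini; then choose $f_k\in C([a,b],\R)$ with $\|f_k-f\|_{L^1}\to0$ and estimate $\big|\int\int f_kKf_k-\int\int fKf\big|\le\|K\|_\infty\big(\|f_k\|_{L^1}+\|f\|_{L^1}\big)\|f_k-f\|_{L^1}\to0$. Since each $\int\int f_kKf_k\ge0$ by the continuous case, the limit $\int\int fKf$ is $\ge0$ as well.

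I expect the only genuinely delicate points to be the two limiting arguments: verifying $\int_a^b\int_a^b\varphi_i^\eps(x)\varphi_j^\eps(y)K(x,y)\,dx\,dy\to K(x_i,x_j)$ via uniform continuity of $K$ on $[a,b]^2$ (and constructing admissible bumps near the endpoints), and confirming that the double Riemann sums of the continuous integrand converge to the double integral. Everything else is bookkeeping; in particular, symmetry of $K$ and the precise choice of function space enter only through which approximants are available, not through any analytic subtlety.
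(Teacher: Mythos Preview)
Your argument is correct, and for $(iii)\Rightarrow(ii)$ and $(ii)\Rightarrow(i)$ it runs parallel to the paper's: the paper also uses continuous bump functions supported near the $x_i$ to pass from the integral condition to the discrete one, although it organizes the estimate as a proof by contradiction with two separate parameters $\delta,\eps$ rather than your direct limiting argument with normalized bumps.

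The genuine divergence is in $(i)\Rightarrow(iii)$. You go the elementary route: approximate the double integral for continuous $f$ by Riemann sums, observe that each such sum is a quadratic form of the type controlled by $(i)$, pass to the limit, and then extend from $C$ to $L^1$ by density together with the bilinear estimate $\|K\|_\infty(\|f_k\|_{L^1}+\|f\|_{L^1})\|f_k-f\|_{L^1}$. The paper instead invokes Mercer's spectral theorem: from $(i)$ it obtains a uniformly convergent expansion $K(x,y)=\sum_k\sigma_k\,\overline{\varphi_k(x)}\varphi_k(y)$ with $\sigma_k>0$, substitutes into the double integral, justifies the interchange of sum and integral via Fubini, and recognizes the result as $\sum_k\sigma_k\bigl|\int f\varphi_k\bigr|^2\ge0$. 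Your approach is considerably lighter for the classical case on $[a,b]$, requiring no spectral theory at all; the paper's route, by contrast, is chosen because its true target is the matrix-valued generalization on separable metric spaces, where Riemann sums are unavailable and the operator-theoretic machinery of a generalized Mercer theorem becomes the natural tool.
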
 
The definition of a positive definite kernel may be generalized in various ways, most importantly, to more general domains and codomains. The object of this work is the generalization of Theorem \ref{thm:mercer-young} to matrix-valued kernels on separable metric spaces.  Our main result is the following: 
\begin{theorem} \label{thm:generalization}
Let $X$ be a separable metric space equipped with a locally finite measure $\mu$ on $\mathcal{B}(X)$ that has support $X$. Let $N\in\N$ and $K:X\times X\to\R^{N\times N}$ be a bounded and continuous kernel satisfying $K(x,y)=K(y,x)^\top$ for all $x,y\in X$. Then the following statements are equivalent:
\begin{enumerate}[(i)]
\item $K$ is positive definite, i.e.,
    \be \label{eq:positive definite kernel real}
    \sum_{i,j=1}^n c_i^\top K(x_i,x_j)c_j\geq 0, \text{ for any } n\in\N, \ x_1,\dots,x_n\in X,\  c_1,\dots,c_n\in\R^N. 
    \ee
\item $K$ is integrally positive definite with respect to $C(X,\R^N)\cap L^1(X,\R^N)$, i.e.,
   \be\label{eq:ipd wrt C}
    \int_X\int_X f(x)^\top K(x,y)f(y)d\mu (x)d\mu (y)\geq 0, \text{ for all }f\in C(X,\R^N)\cap L^1(X,\R^N).\vspace{-2mm}
    \ee
\item $K$ is integrally positive definite with respect to $L^1(X,\R^N)$, i.e.,
    \be\label{eq:ipd wrt L1}
    \int_X\int_X f(x)^\top K(x,y)f(y)d\mu (x)d\mu (y)\geq 0, \text{ for all }f\in L^1(X,\R^N). 
    \ee
\end{enumerate}
\end{theorem}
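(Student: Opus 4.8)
The plan is to prove the cycle (i)$\Rightarrow$(iii)$\Rightarrow$(ii)$\Rightarrow$(i). The implication (iii)$\Rightarrow$(ii) is trivial, since $C(X,\R^N)\cap L^1(X,\R^N)\subseteq L^1(X,\R^N)$. A remark used throughout: the bilinear form $Q(f,g):=\int_X\int_X f(x)^\top K(x,y)g(y)\,d\mu(x)\,d\mu(y)$ obeys $|Q(f,g)|\le\|K\|_\infty\|f\|_{L^1(\mu)}\|g\|_{L^1(\mu)}$, so $f\mapsto Q(f,f)$ is continuous on $L^1(\mu,\R^N)$, and likewise on $L^1(\nu,\R^N)$ for any finite Borel measure $\nu$.

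For (ii)$\Rightarrow$(i), fix $x_1,\dots,x_n\in X$ and $c_1,\dots,c_n\in\R^N$; grouping coincident points, we may assume the $x_i$ are pairwise distinct. For all sufficiently small $r>0$ the closed balls $\bar B(x_i,r)$ are pairwise disjoint and of finite $\mu$-measure (local finiteness), while $\mu(B(x_i,r))>0$ since $\mu$ has full support. Put $\phi_i^r(x):=(1-d(x,x_i)/r)^+$, which is continuous, bounded, supported in $\bar B(x_i,r)$, and positive on $B(x_i,r)$; then $\psi_i^r:=\phi_i^r/\int_X\phi_i^r\,d\mu$ is a continuous probability density and $f_r:=\sum_i c_i\psi_i^r$ lies in $C(X,\R^N)\cap L^1(X,\R^N)$. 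Feeding $f_r$ into (ii) and expanding gives $\sum_{i,j}c_i^\top\big(\int_X\int_X\psi_i^r(x)\psi_j^r(y)K(x,y)\,d\mu(x)\,d\mu(y)\big)c_j\ge0$. As $r\downarrow0$ the inner double integral, being an average of $K$ over $\bar B(x_i,r)\times\bar B(x_j,r)$, converges to $K(x_i,x_j)$ by continuity of $K$ at $(x_i,x_j)$; hence $\sum_{i,j}c_i^\top K(x_i,x_j)c_j\ge0$.

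For (i)$\Rightarrow$(iii), since simple functions with finite-measure support are dense in $L^1(\mu,\R^N)$ and $Q(\cdot,\cdot)$ is $L^1(\mu)$-continuous, it suffices to treat $f=\sum_{k=1}^m c_k\one_{A_k}$ with the $A_k$ pairwise disjoint Borel sets of finite measure. With $A:=\bigcup_k A_k$, $\lambda:=\mu|_A$ a finite Borel measure, and $g:=\sum_k c_k\one_{A_k}$, Fubini gives $Q(f,f)=\int_X\int_X g(x)^\top K(x,y)g(y)\,d\lambda(x)\,d\lambda(y)$. I would then approximate $g$ in $L^1(\lambda,\R^N)$ by some $\tilde g\in C_b(X,\R^N)$ — legitimate because $\lambda$ is a finite Borel measure on a metric space, hence inner regular by closed sets, so $C_b(X,\R^N)$ is dense in $L^1(\lambda,\R^N)$ — and use the $L^1(\lambda)$-continuity of $Q$ to reduce to showing $\int_X\int_X\tilde g(x)^\top K(x,y)\tilde g(y)\,d\lambda(x)\,d\lambda(y)\ge0$ for continuous bounded $\tilde g$. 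Finally, since $X$ is separable metric, the normalisation of $\lambda$ is a weak limit of finitely supported probability measures, so there are finitely supported measures $\lambda_j=\sum_i w_i^{(j)}\delta_{x_i^{(j)}}$ with $\lambda_j\otimes\lambda_j\Rightarrow\lambda\otimes\lambda$; as $(x,y)\mapsto\tilde g(x)^\top K(x,y)\tilde g(y)$ is continuous and bounded on $X\times X$, this yields $\int_X\int_X\tilde g^\top K\tilde g\,d\lambda\,d\lambda=\lim_j\sum_{i,i'}\big(w_i^{(j)}\tilde g(x_i^{(j)})\big)^\top K\big(x_i^{(j)},x_{i'}^{(j)}\big)\big(w_{i'}^{(j)}\tilde g(x_{i'}^{(j)})\big)$, and each element of this sequence is $\ge0$ by the discrete condition (i) applied to the points $x_i^{(j)}$ with coefficients $w_i^{(j)}\tilde g(x_i^{(j)})\in\R^N$. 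This gives (iii).

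I expect (i)$\Rightarrow$(iii) to be the main obstacle. The naive approach — discretising $\int_{A_k}\int_{A_l}K\,d\mu\,d\mu$ by Riemann-type sums and invoking (i) directly — is awkward to make rigorous in this generality, since $K$ need not be uniformly continuous and $\mu$ need not be inner regular by compact sets. Routing the argument through weak convergence of finitely supported measures circumvents this, at the price of a preliminary replacement of the discontinuous simple test function by a continuous one, which is why the density of $C_b$ in $L^1(\lambda)$ and the $L^1$-continuity of $Q$ enter as separate ingredients. The remaining points are routine: checking $\psi_i^r\in C(X,\R)\cap L^1(X,\R)$, the Fubini manipulations (licit because $K$ is bounded and $\lambda$ is finite), and the stability of weak convergence under products on separable metric spaces.
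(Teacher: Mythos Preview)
Your proof is correct. The implication (ii)$\Rightarrow$(i) follows the same idea as the paper --- both approximate Dirac masses by continuous bumps --- though your explicit tent functions and direct limit are marginally cleaner than the paper's Urysohn-plus-contradiction route. The substantive difference is in (i)$\Rightarrow$(iii). The paper invokes a generalized Mercer theorem for matrix-valued kernels (De~Vito, Umanit{\`a} and Villa) to obtain a uniformly convergent eigenfunction expansion $K_{\ell m}(x,y)=\sum_k\sigma_k\overline{\varphi_k^\ell(x)}\varphi_k^m(y)$ with $\sigma_k>0$, from which nonnegativity of the quadratic form follows by a Fubini computation; this is carried out first for finite $\mu$ and then extended to the locally finite case by dominated convergence. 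Your argument stays entirely within elementary measure theory: reduce to simple test functions by $L^1$-density and the $L^1$-continuity of $Q$, pass to a finite restricted measure $\lambda$, replace the simple function by a bounded continuous one via regularity of finite Borel measures on metric spaces, and finally discretize $\lambda$ by weak approximation with finitely supported probability measures on the separable metric space, at which point (i) applies directly. Your route is self-contained and avoids the external Mercer-type machinery; the paper's route, on the other hand, delivers the spectral expansion of $K$ as a byproduct, which is of independent interest but comes at the cost of a heavier citation and a more delicate Fubini justification.
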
 
\begin{remark}
Theorem \ref{thm:generalization} simplifies to Theorem \ref{thm:mercer-young} when choosing $X=[a,b]$, $N=1$, and $\mu$ to be the Lebesgue measure on $\mathcal{B}([a,b])$, as all continuous functions on the compact interval $[a,b]$ are bounded and thus Lebesgue-integrable.
\end{remark}
In Section \ref{sec:proof}, we prove Theorem \ref{thm:generalization} by establishing the chain of implications $$(iii)\Rightarrow (ii)\Rightarrow(i)\Rightarrow (iii),$$ with the first one being trivial.
For the second implication $(ii)\Rightarrow(i)$, we generalize Mercer's original geometric argument (see Part II of \cite{mercer1909functions}) from compact intervals to separable metric spaces. A significant challenge here is that the original proof explicitly constructs continuous cutoff functions, a method that does not directly extend to general metric spaces. We overcome this obstacle by employing topological tools, notably Urysohn’s lemma, to construct these functions implicitly. Moreover, an advantage of this adaptation is that it extends naturally from scalar-valued to matrix-valued kernels, requiring only the incorporation of an appropriate matrix norm. For the third implication $(i)\Rightarrow(iii)$, the extension to the matrix-valued framework is considerably more difficult since most established proofs are based on spectral theorems for scalar kernels. We address this difficulty by applying a version of Mercer’s theorem for matrix-valued kernels on separable metric spaces developed by \citet{devito2013extension}. In our proof, we first assume a finite measure and subsequently extend our findings to the locally finite setting.

\paragraph{Related literature.} Although the classical Mercer-Young Theorem (Theorem \ref{thm:mercer-young}) dates back to  the early twentieth century \cite{mercer1909functions,young1910note}, various related results have emerged in the literature since then. Inspired by \citet{caratheodory1907variabilitatsbereich} and \citet{toeplitz1911fourier},   
\citet{mathias1923positive} and \citet{bochner1932vorlesungen,bochner1933monotone} studied (independently of Mercer’s work) positive definite functions $\psi:\R\to\C$, which constitute a special class of positive definite kernels $K$ of the form $K(x,y)=\psi(x-y)$.
In particular, Bochner showed the equivalence of (i) and (ii) for bounded, continuous positive definite functions (see \S 20 in \cite{bochner1932vorlesungen}). In the same work, he also proved his celebrated result known as Bochner's theorem, which characterizes continuous positive-definite functions as exactly those that can be represented as the Fourier transform of a finite positive measure (see Theorem 23 in \S 20 of \cite{bochner1932vorlesungen} and Chapter 1.4 of \cite{rudin2017fourier} for a more general version). Motivated by Bochner's results, \citet{cooper1960positive} studied integrally positive definite functions and derived the equivalence of (i) and (iii) for bounded, continuous positive definite functions. Since their introduction, Bochner's theorem and the theory of positive definite functions have been extended into several directions, see \cite{bhatia2009positive,glockner2003positive,stewart1976positive} for an overview. 

Although the aforementioned theory of positive definite functions was not directly influenced by Mercer, another branch of literature on the more general concept of positive definite kernels was inspired by Mercer’s theorem, which provides a representation of a positive definite kernel as a sum of products of its eigenvalues and eigenfunctions (see \S 10 in \cite{mercer1909functions}). For instance, Mercer’s work \cite{mercer1909functions} led to the development of the theory of reproducing kernel Hilbert spaces by \citet{moore1916properly,moore1935general}, \citet{aronszajn1950theory}, and others. More recently, among other extensions, Mercer's theorem was generalized to kernels on more general domains \cite{ferreira2009eigenvalues,steinwart2012mercer,sun2005mercer} and to kernels on more general codomains \cite{devito2013extension}. In particular, when $K$ is a scalar-valued kernel on a general domain $X$, \citet{ferreira2009eigenvalues} showed in an $L^2$-setting the implication $(i)\Rightarrow (iii)$ (which in turn yields $(i)\Rightarrow (ii)$) under the assumptions that $X$ is a topological space and $\mu$ is strictly positive (see Theorem 2.3 in \cite{ferreira2009eigenvalues}). Moreover, they proved that $(iii)\Rightarrow (i)$ under the assumptions that $X$ is a locally compact Hausdorff space and $\mu$ is a Radon measure (see Corollary 2.2 in \cite{ferreira2009eigenvalues}). For comprehensive surveys on positive definite kernels and Mercer's theorem, see \cite{buhmann2000radial,fasshauer2011positive,stewart1976positive}. 

\paragraph{Our contribution.} In this work, we establish the Mercer-Young Theorem for matrix-valued kernels on separable metric spaces (Theorem \ref{thm:generalization}). Our contributions can be summarized as follows:
\begin{itemize} 
    \item {General domains:} we prove the implication $(ii)\Rightarrow (i)$ for kernels on general domains, thereby closing a gap in the literature. To achieve this, we extend Mercer's original proof (see Part II of \cite{mercer1909functions}) from compact intervals to separable metric spaces. This extension is not straightforward, since the original proof exploits the structure of the domain to construct auxiliary cutoff functions explicitly. We resolve this issue by means of topological methods, particularly Urysohn’s lemma, to construct those auxiliary functions implicitly, and thereby leverage the topological regularity of the general domain (see equation \eqref{eq:cutoff function} and Figures \ref{figure:functions}-\ref{figure:square}).
     \item {General codomains:} we establish the equivalence of the statements $(i)$, $(ii)$, and $(iii)$ for matrix-valued kernels, a topic that has not been addressed even for simple domains, despite its importance for various emerging applications. In particular, to derive the implication $(i) \Rightarrow (iii)$, we provide a novel approach based on a version of Mercer’s theorem for matrix-valued kernels due to \citet{devito2013extension}. Namely, we combine the spectral decomposition of the general kernel with measure theoretic arguments to first prove the implication for finite measures (Case~1), and then generalize our result to locally finite measures (Case~2).  
\end{itemize}

\paragraph{Applications of Theorem \ref{thm:generalization}.} 
Since their introduction more than 100 years ago, positive definite kernels and their generalizations have found numerous applications in the fields of functional, numerical and stochastic analysis, approximation theory, computer graphics, engineering design, fluid dynamics, geostatistics, probability theory, statistics and  machine learning (see \cite{fasshauer2011positive} for a survey). Therefore, the result of Theorem \ref{thm:generalization} is not only of theoretical interest, but also of relevance for applications in the aforementioned fields. In particular, the minimization of the energy integral 
$$
\mathcal E_\mu(A)=  \int_{\mathbb R}\int_{\mathbb R} K(x-y) d\mu(x) d\mu(y), 
$$
over all probability measures $\mu$ supported on a fixed compact set $A \subset \R$ plays an important role in potential theory. In the case that a minimizing measure $\mu^*$ exists, it is called an equilibrium measure. The capacity of the set $A$ is then given by 
\be \label{enr}
\textrm{cap}(A) = \frac{1}{\mathcal E_{\mu^*}(A)}. 
\ee
A discrete version of the energy integral is defined as follows,  
\be\label{enr2}
 E_{\omega_N}(A)=   \frac{1}{N^2}\sum_{ x,y \in \omega_N} K(x-y), 
\ee
with the infimum of $E_{\omega_N}(A)$ taken over all $N$-point configurations $\omega_N=\{x_1,\ldots,x_N\}$ in $A$. The  special case of the Riesz kernel, i.e., $K(x,y) = \frac{1}{|x-y|^s}$ with $s>0$, has been extensively studied, and classical results prove that the empirical measure of the minimizer of \eqref{enr2}, $\omega^*_N =\{x^N_1,...,x_N^N\}$, given by $\frac{1}{N}\sum_{i=1}^N\delta_{x_i^N}$ converges to $\mu^*$ in the weak* topology as $N\rr \infty$, when $\mu^*$ is of finite energy. Results for more general kernels and for measure spaces supported on manifolds are subject to current research in potential theory (see \cite{Bjorck56,Fuglede60,Hardin04} and the comprehensive textbooks \cite{Borodachov19,Helms09}).

In recent years, another variant of this problem has gained significant interest in the areas of convex optimization and control theory, see \citep{abi2021integral, abijaber2023equilibrium,Alf-Schied-13, GSS,wang2022linear} and references therein.   
In this class of optimization problems the cost functionals consist of quadratic terms containing kernels along with lower-order terms, i.e., they are of the form 
  \be \label{v1}
V^1 := \min_{u \in \mathcal A}\, \E\left[   \int_{\mathbb R_{+}}\int_{\mathbb R_{+}}u(t)^\top K(t,s)u(s)ds dt  + ...\right],
    \ee
where optimization is performed over a class $\mathcal A$ of admissible stochastic or deterministic controls in $L^1(\R_+,\R^N)$, and the expectation is omitted in the deterministic case. Results such as Theorem \ref{thm:generalization} are of central importance for approximating these problems and reducing their dimension. In particular, one may consider the following minimization problem,  
  \be  \label{v2}
V^2 := \inf_{ \mathcal P \in \mathbb{P}} \min_{ v\in  \Xi(\mathcal P)}\E\left[   \sum_{ i,j \geq 1}  v_i^\top K(t_i,t_j)v_j   + ...\right],
    \ee
where $\mathbb{P}$ is the class of all locally finite 
partitions $\mathcal{P}$ of $\mathbb{R}_{+}$ and $\Xi(\mathcal P)$ is the class of piecewise constant stochastic or deterministic controls in $\mathcal{A}$ varying only on the partition $\mathcal P \in \mathbb{P}$. Then \eqref{v2} provides an approximation to the stochastic or deterministic control problem \eqref{v1} by reducing it to a finite-dimensional matrix optimization problem (see Theorem 3 in \cite{alfonsi2016multivariate} for a special case). In this setting, conditions \eqref{eq:positive definite kernel real} and \eqref{eq:ipd wrt C} are essential for characterizing the convexity of the cost functionals in \eqref{v2} and \eqref{v1}, respectively (see e.g \citep{abijaber2023equilibrium,alfonsi2016multivariate, GSS}). Indeed one can easily notice that if there exists a deterministic strategy $u\in \mathcal A$ such that \eqref{eq:ipd wrt C} doesn't hold, i.e.,  
$$
  \int_{\mathbb R_{+}}\int_{\mathbb R_{+}}u(t)^\top K(t,s)u(s)ds dt <0, 
$$
then for any constant $c>1$, the performance of $cu \in \mathcal{A}$ will be even lower, hence the value of \eqref{v1} is $-\infty$ and the optimization problem is ill-posed.

The extension of Theorem \ref{thm:mercer-young} to separable metric spaces is of particular interest when dealing with kernel methods in machine learning, where positive definite kernels defined on general input spaces are often employed. We also refer to \citep{brouard2016input,caponnetto2008universal,kadri2016operator,minh2016unifying,saha2020learning}, where the positive definite property of a matrix-valued kernel is associated with its reproducing property and applications in multi-task learning and related areas are studied. In this class of machine learning problems the associated loss functions are often quadratic and can be compared to \eqref{v1}, with general input spaces as domains of integration.
Similar optimization problems arise in the scenario of kernel estimation. In this setup the kernels describe a linear persistent temporal interaction between the input and the output. 
Specifically, by considering a time grid over an interval $[0,T]$, that is, $0=t_1<...<t_M=T$ for some $M \in \mathbb{N}$, the observed $N$ input and output data points $(u^{(n)},y^{(n)}) \in \mathbb{R}^M\times \mathbb{R}^M$ satisfy, 
\begin{equation}
\label{eq:price_impact_linear_regression}
y^{(n)} =K^\star u^{(n)} +\eps^{(n)}, 
\quad \text{with}\quad \eps^{(n)}= (\eps^{(n)}_{t_i})_{i=1}^{M}, \quad n=1,...,N,  
\end{equation}
where $(K^\star_{i,j})_{1\le j\le i\le M}$ are the unknown kernel coefficients and $(\eps^{(n)})_{n=1}^N$ are conditionally independent random variables representing the noise in the system. 
The kernel can be estimated via  a least-squares method, i.e., by minimizing  the following quadratic loss over a certain class of admissible kernels $\mathscr{K}_{\textrm{ad}} $,  
\begin{equation} \label{eq:G_n_lambda_volterra}
K_{N,\lambda}\coloneqq\argmin_{K \in \mathscr{K}_{\textrm{ad}}}
\left(
\sum_{n=1}^N \|y^{(n)}-K u^{(n)}\|^2+\lambda\|K\|_{\mathbb{R}^{M \times M}}^2
\right),
\end{equation}
where $\lambda>0$ is a given regularization parameter. While solving this class of problems for a discrete time grid is attainable, the solution for continuous time is much more challenging. Hence, Theorem \ref{thm:generalization} can help to achieve approximation results for the continuous-time case by using projections to discrete time (see \citep{BENATIA2017269,zhang_24,Wolf_24}). 

Finally, positive definite matrix-valued kernels play a crucial role in the numerical analysis of partial differential equations (PDEs), particularly in kernel-based discretization methods. As discussed in \cite{giesl2018kernel}, such kernels are often used for solving matrix-valued PDEs, also see \cite{giesl2021matrix,giesl2021computation}. Matrix-valued PDEs commonly arise in areas like dynamical systems, where contraction metrics are needed to study stability and invariant sets. The use of positive definite matrix-valued kernels ensures that numerical approximations preserve important structural properties such as stability and convergence. Since Theorem \ref{thm:generalization} characterizes such positive definite matrix-valued kernels, it helps to determine whether a given matrix-valued kernel admits a reproducing kernel Hilbert space representation and ensures that numerical discretization schemes based on such kernels retain their fundamental properties. Thus, it strengthens the foundation of kernel-based numerical PDE methods by aiding in the selection of  kernels.

\section{Proof of the Main Theorem}\label{sec:proof}
This section is dedicated to the proof of Theorem \ref{thm:generalization}, which will be given by showing the following three implications,
\be
\eqref{eq:ipd wrt L1}\Rightarrow \eqref{eq:ipd wrt C},\quad\eqref{eq:ipd wrt C}\Rightarrow \eqref{eq:positive definite kernel real},\quad \eqref{eq:positive definite kernel real}\Rightarrow \eqref{eq:ipd wrt L1}.
\ee
In order to prove the third implication, the following auxiliary lemma will be needed:
\begin{lemma}\label{lemma:equivalence of real and complex}
Let $X$ be an arbitrary set and $K :X\times X\to \R^{N\times N}$ be a mapping with $K(x,y)=K(y,x)^\top$ for all $x,y\in X$. Then condition \eqref{eq:positive definite kernel real} is equivalent to
\be
\sum_{i,j=1}^n \overline{z_i}^\top K(x_i,x_j)z_j\geq 0,  \ \text{for any }n\in\N,\  x_1,\dots, x_n\in X,\  z_1,\dots,z_n\in\C^N.  \label{eq:positive definite kernel complex}
\ee
\end{lemma}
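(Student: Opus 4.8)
The plan is to establish the two implications separately. The direction \eqref{eq:positive definite kernel complex} $\Rightarrow$ \eqref{eq:positive definite kernel real} is immediate: any real vector $c\in\R^N$ is in particular an element of $\C^N$, and for real vectors one has $\overline{c_i}^\top K(x_i,x_j)c_j=c_i^\top K(x_i,x_j)c_j$, so choosing $z_i=c_i$ in \eqref{eq:positive definite kernel complex} recovers \eqref{eq:positive definite kernel real} verbatim. No symmetry of $K$ is needed for this.

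For the converse \eqref{eq:positive definite kernel real} $\Rightarrow$ \eqref{eq:positive definite kernel complex}, I would fix $n\in\N$, points $x_1,\dots,x_n\in X$ and vectors $z_1,\dots,z_n\in\C^N$, and split each into real and imaginary parts $z_j=a_j+ib_j$ with $a_j,b_j\in\R^N$. Since each $K(x_i,x_j)$ is a fixed real matrix and the $a_j,b_j$ are real vectors, the distributive law applies and expanding the Hermitian form yields
\[
\sum_{i,j=1}^n \overline{z_i}^\top K(x_i,x_j)z_j=\Big(\sum_{i,j=1}^n a_i^\top K(x_i,x_j)a_j+\sum_{i,j=1}^n b_i^\top K(x_i,x_j)b_j\Big)+i\Big(\sum_{i,j=1}^n a_i^\top K(x_i,x_j)b_j-\sum_{i,j=1}^n b_i^\top K(x_i,x_j)a_j\Big).
\]
The real part is a sum of two expressions, each nonnegative by applying \eqref{eq:positive definite kernel real} to the tuples $(a_1,\dots,a_n)$ and $(b_1,\dots,b_n)$, so it only remains to show the imaginary part vanishes.

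This last point is the only place the hypothesis $K(x,y)=K(y,x)^\top$ is used. Each summand $b_i^\top K(x_i,x_j)a_j$ is a real scalar, hence equal to its own transpose: $b_i^\top K(x_i,x_j)a_j=a_j^\top K(x_i,x_j)^\top b_i=a_j^\top K(x_j,x_i)b_i$. Summing over $i,j$ and relabelling the indices $i\leftrightarrow j$ gives $\sum_{i,j}b_i^\top K(x_i,x_j)a_j=\sum_{i,j}a_i^\top K(x_i,x_j)b_j$, so the imaginary part above is identically zero. Consequently the left-hand side of \eqref{eq:positive definite kernel complex} equals the nonnegative real number $\sum_{i,j}a_i^\top K(x_i,x_j)a_j+\sum_{i,j}b_i^\top K(x_i,x_j)b_j$, which proves \eqref{eq:positive definite kernel complex}. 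There is no serious obstacle in this argument; the only care required is the index bookkeeping in the cancellation of the imaginary part, which is precisely where the transpose-symmetry of $K$ enters.
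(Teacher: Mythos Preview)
Your proof is correct and follows essentially the same approach as the paper: split $z_j$ into real and imaginary parts, observe that the real part of the Hermitian form is a sum of two quadratic forms that are nonnegative by \eqref{eq:positive definite kernel real}, and use the symmetry $K(x,y)=K(y,x)^\top$ to show the imaginary part cancels. The paper's write-up is slightly more compressed (it drops the nonnegative real terms via a single ``$\geq$'' and then verifies the remaining imaginary terms vanish), but the argument is the same.
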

\begin{proof}[Proof of Lemma \ref{lemma:equivalence of real and complex}]
Let $n\in\N$, $x_1,\dots, x_n\in X$, $z_1,\dots,z_n\in\C^N$, and assume that condition \eqref{eq:positive definite kernel real} holds. Then a direct computation using \eqref{eq:positive definite kernel real} yields,
\begin{align}
&\sum_{i,j=1}^n \overline{z_i}^\top K(x_i,x_j)z_j
=\sum_{i,j=1}^n ( \operatorname{Re}(z_i)-i\operatorname{Im}(z_i))^\top  K(x_i,x_j)(\operatorname{Re}(z_j)+i\operatorname{Im}(z_j))  \\
&\geq -i\sum_{i,j=1}^n \operatorname{Im}(z_i)^\top  K(x_i,x_j)\operatorname{Re}(z_j) 
+ i\sum_{i,j=1}^n \operatorname{Re}(z_i)^\top  K(x_i,x_j)\operatorname{Im}(z_j)  \\
&=-i\sum_{i,j=1}^n \operatorname{Im}(z_i)^\top  K(x_i,x_j)\operatorname{Re}(z_j) 
+ i\sum_{i,j=1}^n \operatorname{Im}(z_j)^\top  K(x_i,x_j)^\top\operatorname{Re}(z_i)=0,  
\end{align}
since $K(x_i,x_j)^\top=K(x_j,x_i)$ by assumption. The reverse direction is trivial.
\end{proof}

\begin{proof}[Proof of Theorem \ref{thm:generalization}] We proceed by showing the three implications,
\be 
\eqref{eq:ipd wrt L1}\Rightarrow \eqref{eq:ipd wrt C},\quad  \eqref{eq:ipd wrt C}\Rightarrow \eqref{eq:positive definite kernel real},  \quad  \eqref{eq:positive definite kernel real}\Rightarrow \eqref{eq:ipd wrt L1}.
\ee
The first implication \eqref{eq:ipd wrt L1}$\,\Rightarrow\,$\eqref{eq:ipd wrt C} is trivial.

In order to prove \eqref{eq:ipd wrt C}$\,\Rightarrow\,$\eqref{eq:positive definite kernel real}, we will generalize Mercer's original proof (see Part II of \cite{mercer1909functions}). Assume that condition \eqref{eq:ipd wrt C} holds. Fix $n\in\N$ and $n$ points $x_1,\dots,x_n\in X$. Without loss of generality, we can assume the $n$ points to be distinct, because for $n\geq 2$, $x_{n-1}=x_n$ and any $c_1,\dots,c_n\in\R^N$, we can define $\wt{c}_{n-1}\vcentcolon=c_{n-1}+c_n$ and $\wt{c_i}\vcentcolon=c_i$ for $1\leq i\leq n-3$ so that the following holds,
\be
\sum_{i,j=1}^n c_i^\top K(x_i,x_j)c_j=\sum_{i,j=1}^{n-1} \wt{c_i}^\top K(x_i,x_j)\wt{c_j}.
\ee
This assumption allows us to choose real numbers $\dl,\eps>0$ sufficiently small such that
\be\label{eq:disjoint balls}
B_{\dl+\eps}(x_i)\cap B_{\dl+\eps}(x_j)=\emptyset,\quad\textrm{for all }i,j=1,\dots ,n\text{ with } i\neq j.
\ee
Since $\mu$ is locally finite, we can moreover assume that 
\be \label{eq:finite measure}
\mu(\overline{B_{\dl+\eps}(x_i)})<\infty,\quad\textrm{for all }i=1,\dots ,n.
\ee 
Here $B_r(x)\subset X$ denotes the open ball of radius $r>0$ centred at $x\in X$. As a metric space, $X$ is in particular a normal topological space. Therefore, using Urysohn's Lemma (see Lemma 15.6 in \cite{willard2012general}), for $i=1,\dots, n$, there exist continuous functions illustrated in Figure \ref{figure:functions} given by,
\be\label{eq:cutoff function}
f_{x_i,\dl,\eps}:X\to [0,1],\quad \text{with }f_{x_i,\dl,\eps}(x)=
\begin{cases}
1 \qquad\text{for }x\in \overline{B_{\dl}(x_i)},\\
0 \qquad\text{for }x\in X\setminus B_{\dl+\eps}(x_i),
\end{cases}
\ee
where the closed subsets $\overline{B_{\dl}(x_i)}$ and $X\setminus B_{\dl+\eps}(x_i)$ of $X$ are disjoint by definition.
\begin{figure}[!t]
\centering
\captionsetup{justification=centering,margin=1cm}
\begin{center}
\begin{overpic}[width=0.7\textwidth,tics=10]{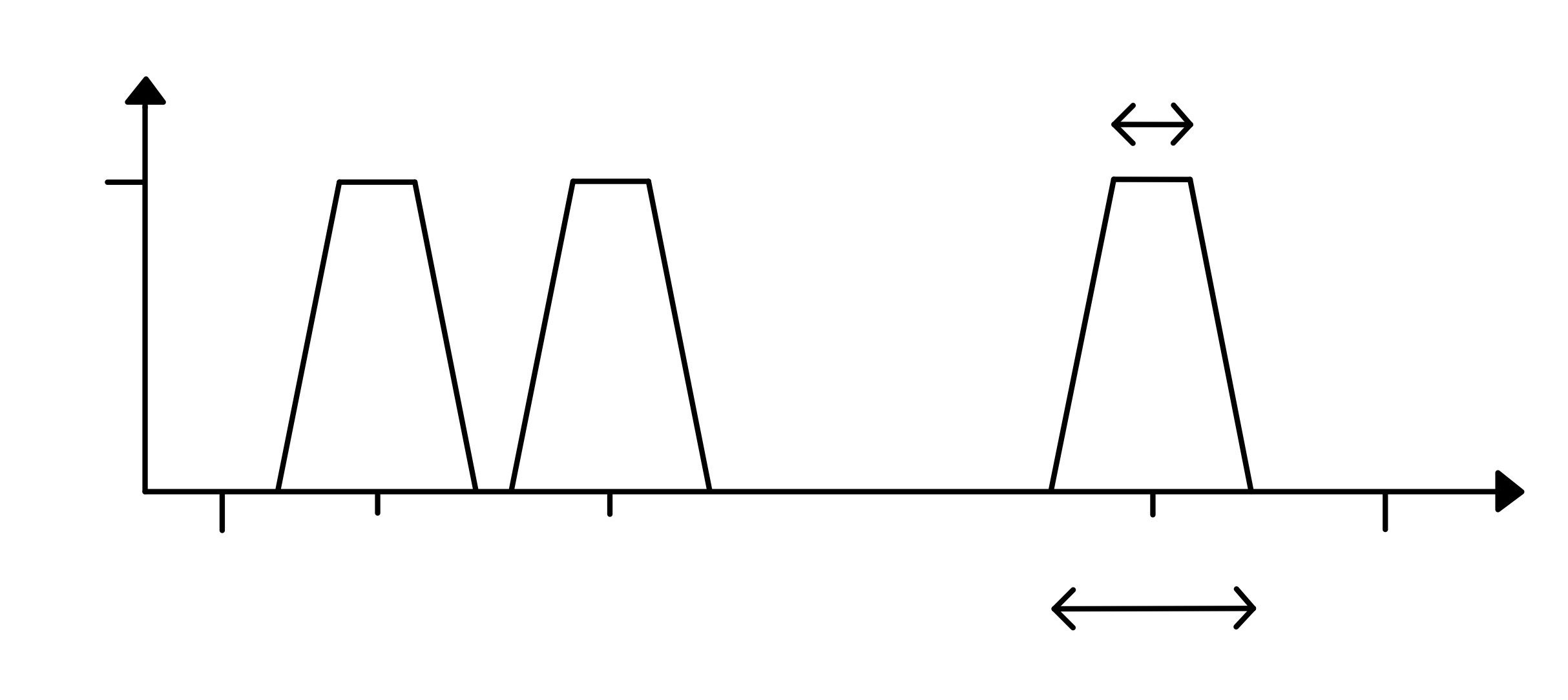}
 \put (4,31.7) {$1$}
 \put (13.2,7.2) {$a$}
 \put (22.7,9) {$x_1$}
 \put (37.5,9) {$x_2$}
 \put (72.2,9) {$x_3$}
 \put (87.5,7.2) {$b$}
 \put (74,2.2) {\makebox(0,0){$2\dl\!+\!2\eps$}}
 \put (71.8,39) {$2\dl$}
\end{overpic}
\caption{\label{figure:functions}
Functions $f_{x_i,\dl,\eps}$ corresponding to given points $x_i\in X$ for $i\in\{1,\dots,n\}$ and given $\dl,\eps>0$ in the case where $X=[a,b]$ and $n=3$.}
\end{center}
\end{figure}

Motivated by \cite{mercer1909functions}, we introduce several types of subsets of $X\times X$, which are illustrated in Figure \ref{figure:square}. For all $i,j=1,\dots ,n$ let  
\be \label{eq:subsets of XxX}
Q_{ij}\vcentcolon=\overline{B_{\dl+\eps}(x_i)} \times \overline{B_{\dl+\eps}(x_j)},\quad 
q_{ij}\vcentcolon=\overline{B_{\dl}(x_i)}\times \overline{B_{\dl}(x_j)},\quad
r_{ij}\vcentcolon =Q_{ij}\setminus q_{ij}.
\ee
Now let $c_1,\dots, c_n\in\R^N$ be arbitrary vectors. Recalling \eqref{eq:cutoff function}, define the continuous and integrable function,
\be\label{eq:definition f}
f_{\dl,\eps} :X\to\R^N,\quad f_{\dl,\eps}(x)\vcentcolon =\sum_{i=1}^n c_i \,\frac{1}{\mu(\overline{B_{\dl}(x_i)})}f_{x_i,\dl,\eps}(x),
\ee
where $0<\smash{\mu(\overline{B_{\dl}(x_i)})<\infty}$ due to the assumption of Theorem \ref{thm:generalization} that the support of $\mu$ is $X$ and \eqref{eq:finite measure}.

\begin{figure}[!t]
\centering
\captionsetup{justification=centering,margin=0.7cm}
\begin{center}
\begin{overpic}[width=0.8\textwidth,tics=10]{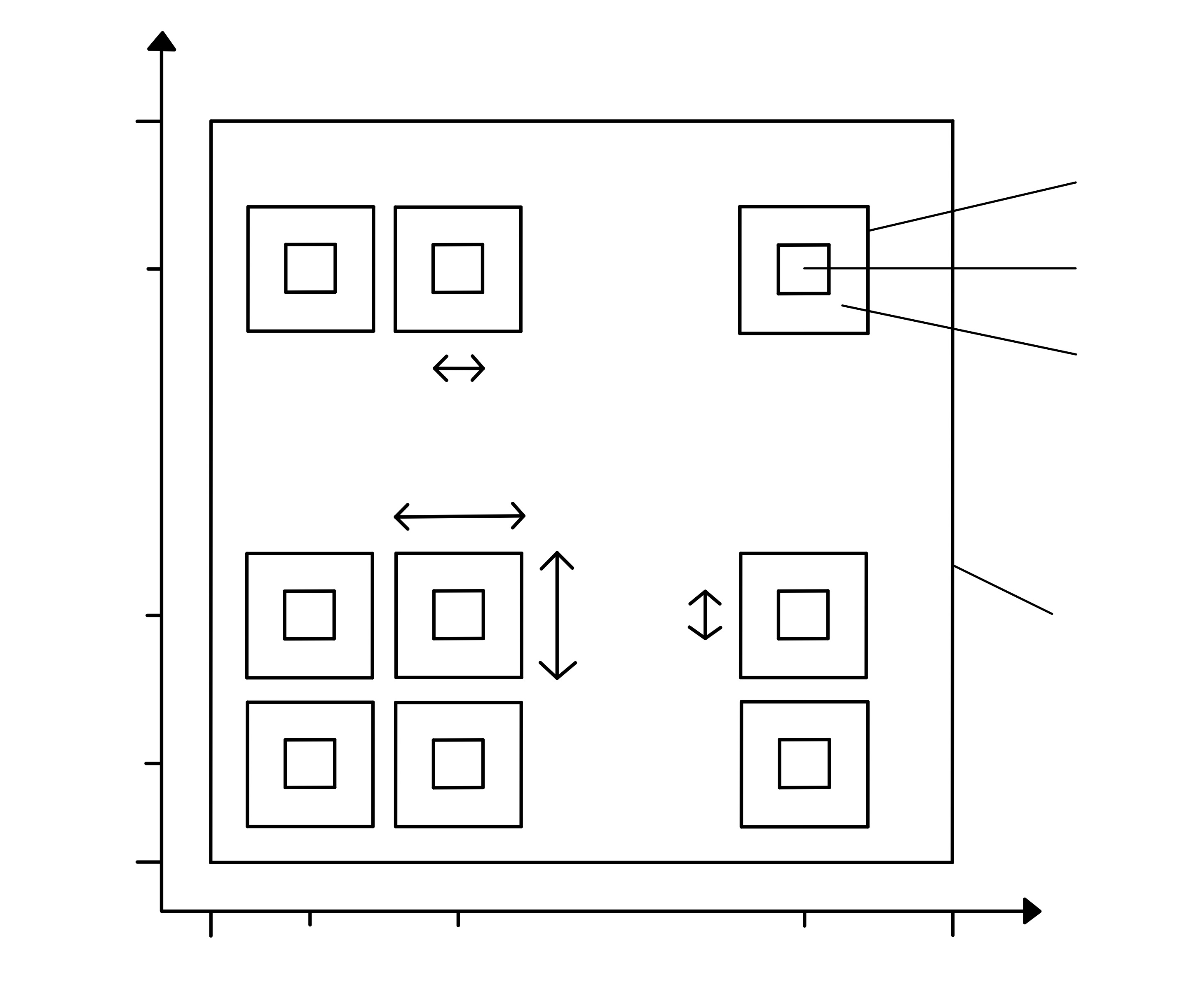}
\put (16.7,2.6){$a$}
\put (24.7,4.3){$x_1$}
\put (37,4.3){$x_2$}
\put (66.1,4.3){$x_3$}
\put (79,2.6){$b$}
\put (9,11.5) {$a$}
\put (8.9,19.5) {$x_1$}
\put (8.9,31.9) {$x_2$}
\put (8.9,60.9) {$x_3$}
\put (9,73.4) {$b$}
\put (36.7,49.4) {$2\dl$}
\put(38.6,44){\makebox(0,0){$2\dl\!+\!2\eps$}}
\put(50,32.7){\makebox(0,0){\rotatebox{-90}{$2\dl\!+\!2\eps$}}}
\put(56,32.7){\makebox(0,0){\rotatebox{-90}{$2\dl$}}}
\put(91,68){$Q_{ij}$}
\put(91,61.2){$q_{ij}$}
\put(91,53.8){$r_{ij}$}
\put(89,31.7){$X\times X$}
\end{overpic}
\caption{\label{figure:square}
The subsets $Q_{ij}$, $q_{ij}$, $r_{ij}\subset X\times X$ associated with given points $x_i,x_j\in X$ for $i,j\in\{1,\dots,n\}$ in the case where $X=[a,b]$ and $n=3$.}
\end{center}
\end{figure}
We denote by $\mu^2=\mu\times\mu$ the product measure on $\mathcal{B}(X\times X)$ and by  $\|\cdot\|$ the Frobenius norm. It follows from  \eqref{eq:disjoint balls}, \eqref{eq:cutoff function}, \eqref{eq:subsets of XxX}, \eqref{eq:definition f}, and the submultiplicativity of the Frobenius norm that, 
\be \label{eq:values of fKf}
\begin{aligned}
&f_{\dl,\eps}(x)^\top K(x,y)f_{\dl,\eps}(y)=\frac{1}{\mu^2(q_{ij})}c_i^\top K(x,y)c_j,  && \text{for }(x,y)\in q_{ij},\\
&|f_{\dl,\eps}(x)^\top K(x,y)f_{\dl,\eps}(y)|\leq \frac{\|c_i\|\|c_j\|}{\mu^2(q_{ij})}\sup\limits_{x,y\in X}  \|K(x,y)\|, && \text{for }(x,y)\in r_{ij},\\
&f_{\dl,\eps}(x)^\top K(x,y)f_{\dl,\eps}(y)=0, && \text{for }(x,y)\notin {\textstyle \bigcup\limits_{i,j=1}^n} Q_{ij} . 
\end{aligned}
\ee
Therefore due to \eqref{eq:values of fKf}, 
\begin{align}\label{eq:seperation of integral}
&\int_X\int_X f_{\dl,\eps}(x)^\top K(x,y)f_{\dl,\eps}(y)d\mu(x)d\mu(y)\\
&=\sum_{i,j=1}^n\frac{1}{\mu^2(q_{ij})}\int_{q_{ij}}c_i^\top K(x,y)c_jd\mu^2(x,y)
+\sum_{i,j=1}^n\int_{r_{ij}}f_{\dl,\eps}(x)^\top K(x,y)f_{\dl,\eps}(y)d\mu^2(x,y). 
\end{align}
Using \eqref{eq:values of fKf}, the second term on the right-hand side of \eqref{eq:seperation of integral} can be bounded as follows,
\be\label{eq:bound r_ij}
\Big|\sum_{i,j=1}^n\int_{r_{ij}}f_{\dl,\eps}(x)^\top K(x,y)f_{\dl,\eps}(y)d\mu^2(x,y)\Big|
\leq \sum_{i,j=1}^n\frac{\mu^2(r_{ij})}{\mu^2(q_{ij})}\|c_i\|\|c_j\|\sup\limits_{x,y\in X}  \|K(x,y)\|.
\ee
We prove \eqref{eq:ipd wrt C}$\,\Rightarrow\,$\eqref{eq:positive definite kernel real} by contradiction. Assume \eqref{eq:ipd wrt C}. Then if \eqref{eq:positive definite kernel real} does not hold, there exists a real number $C>0$, depending on $x_1,\dots,x_n$ and $c_1,\dots,c_n$, with 
\be\label{eq:sum negative}
\sum_{i,j=1}^n c_i^\top K(x_i,x_j)c_j\leq -C.
\ee
We construct the functions $f_{\dl,\eps}$ in \eqref{eq:definition f} as follows. 
Due to the continuity of $K$, we can choose $\dl>0$ and thereby the sets $q_{ij}$ sufficiently small such that 
\be\label{eq:bound cKc}
\big|c_i^\top K(x,y)c_j-c_i^\top K(x_i,x_j)c_j\big|\leq \frac{C}{2n^2},\quad\text{for all }(x,y)\in q_{ij},\ i,j=1,\dots,n.
\ee
Note that $\eps>0$ in $f_{\dl,\eps}$ will be fixed later.

It follows from \eqref{eq:sum negative} and \eqref{eq:bound cKc} that the first term on the right-hand side of \eqref{eq:seperation of integral} is bounded from above as follows,
\be
\begin{aligned}\label{eq:bound -C/2}
&\sum_{i,j=1}^n\frac{1}{\mu^2(q_{ij})}\int_{q_{ij}}c_i^\top K(x,y)c_jd\mu^2(x,y)\\
&\leq \sum_{i,j=1}^n\frac{1}{\mu^2(q_{ij})}\int_{q_{ij}}\Big(c_i^\top K(x_i,x_j)c_j+\frac{C}{2n^2}\Big)d\mu^2(x,y)\\
&=\sum_{i,j=1}^n \Big(c_i^\top K(x_i,x_j)c_j+\frac{C}{2n^2}\Big)\\
&\leq -C+\frac{C}{2}=-\frac{C}{2}.
\end{aligned}
\ee
Now \eqref{eq:seperation of integral} and \eqref{eq:bound -C/2} yield the following,
\be
\int_X\int_X f_{\dl,\eps}(x)^\top K(x,y)f_{\dl,\eps}(y)d\mu(x)d\mu(y)
\leq \sum_{i,j=1}^n\int_{r_{ij}}f(x)^\top K(x,y)f_{\dl,\eps}(y)d\mu^2(x,y)-\frac{C}{2},
\ee
which together with assumption \eqref{eq:ipd wrt C} applied to $ f_{\dl,\eps}$ and \eqref{eq:bound r_ij} implies,
\be\label{eq:contradiction inequality}
\frac{C}{2}\leq \sum_{i,j=1}^n\frac{\mu^2(r_{ij})}{\mu^2(q_{ij})}\|c_i\|\|c_j\|\sup\limits_{x,y\in X}  \|K(x,y)\|.
\ee
Note that the right-hand side of \eqref{eq:contradiction inequality} is finite as $K$ is bounded by assumption. Finally, it follows from \eqref{eq:finite measure} and the definition of the sets $r_{ij}$ in \eqref{eq:subsets of XxX} that
\be
\lim_{\eps\to 0}\mu^2(r_{ij})=\lim_{\eps\to 0}\mu^2(\overline{B_{\dl+\eps}(x_i)}\times \overline{B_{\dl+\eps}(x_j)})
- \mu^2(\overline{B_{\dl}(x_i)}\times \overline{B_{\dl}(x_j)})=0,
\ee
which implies that if $\eps>0$ is chosen sufficiently small, \eqref{eq:contradiction inequality} does not hold and we get a contradiction. This finishes the proof of the implication \eqref{eq:ipd wrt C}$\,\Rightarrow\,$\eqref{eq:positive definite kernel real}.

For the third implication \eqref{eq:positive definite kernel real}$\,\Rightarrow\,$\eqref{eq:ipd wrt L1}, assume that \eqref{eq:positive definite kernel real} holds.

\textbf{Case 1}: First assume that $\mu$ is a finite measure.
The key idea of the proof is to apply a generalized version of Mercer's theorem for matrix-valued kernels (see \cite{devito2013extension}). In order to apply this result, note that \eqref{eq:positive definite kernel real} implies \eqref{eq:positive definite kernel complex} by Lemma \ref{lemma:equivalence of real and complex}. Moreover, since $K$ is bounded and $\mu$ is finite by assumption, it holds that 
\be \label{eq:bound tr}
\int_X \operatorname{Tr} K(x,x)d\mu(x)\leq N\mu(X) \sup_{\\
1\leq \ell \leq N}\sup_{x\in X}  K_{\ell \ell}(x,x)   <\infty,
\ee
where Tr denotes the trace of a matrix in $\R^{N\times N}$. Note that $G$ must be nonnegative on the diagonal, which can be seen from condition \eqref{eq:positive definite kernel real} by choosing $n=1$ and $c_1=e_l$ for example. 
Next, define the linear integral operator
\be
\mathbf{K}:L^2(X,\mathbb{C}^N)\to L^2(X,\mathbb{C}^N), \quad (\mathbf{K}f)(x)\vcentcolon= \int_X K(x,y)f(y)d\mu(y),
\ee
which is well-defined by Theorem A.1 in \cite{devito2013extension}, and denote by $\operatorname{ker}(\mathbf{K})$ its kernel.
Since \eqref{eq:positive definite kernel complex} and \eqref{eq:bound tr} hold, we can apply the generalized version of Mercer's theorem for matrix-valued kernels (see Theorem 4.1 in \cite{devito2013extension}). In combination with the auxiliary Theorem A.1 in \cite{devito2013extension}, it implies the existence of a countable orthonormal basis $\{\varphi_k\}_{k\in I}$ of $\text{ker}(\smash{\mathbf{K}})^\perp\subset L^2(X,\mathbb{C}^N)$ of continuous eigenfunctions of $\smash{\mathbf{K}}$ with a corresponding family $\{\sigma_k\}_{k\in I}\subset(0,\infty)$ of positive eigenvalues such that
\be \label{eq:expansion}
K_{ \ell m}(x,y)=\sum_{k\in I}\sigma_k \overline{\varphi_k^\ell  (x)}\varphi_k^m(y),\quad \textrm{for all } (x,y)\in X\times X, \ \ell,m\in\{1,\dots,N\},
\ee
where the series converges uniformly on $X\times X$. Now let $f\in L^1(X,\R^N)$. From \eqref{eq:expansion} it follows that, 
\be \label{eq:expansion integral}
\begin{aligned}
&\int_X\int_X f(x)^\top K(x,y)f(y)d\mu (x)d\mu (y)\\
&=\int_X\int_X \sum_{\ell,m=1}^N f^\ell(x) \Big(\sum_{k\in I}\sigma_k \overline{\varphi_k^\ell(x)}\varphi_k^m(y)\Big) f^m(y)d\mu(x)d\mu(y) \\ 
&=\int_X\int_X \sum_{k\in I}\sigma_k \sum_{\ell,m=1}^N f^\ell(x) \overline{\varphi_k^\ell(x)}\varphi_k^m(y) f^m(y)d\mu(x)d\mu(y) \\
&=\int_X\int_X \int_{I} \sigma_k \sum_{\ell,m=1}^N f^\ell(x) \overline{\varphi_k^\ell(x)}\varphi_k^m(y) f^m(y)d\nu (k)d\mu(x)d\mu(y),
\end{aligned} 
\ee
where $\nu$ is the counting measure on the countable index set of the eigenfunctions $I$. In particular, $\nu$ is $\sigma$-finite. 

Next, we would like to switch the order of integration on the right-hand side of \eqref{eq:expansion integral} using Fubini's theorem. In order to do that, we show that the following integral is finite, using Young's inequality, \eqref{eq:bound tr}, \eqref{eq:expansion} and the fact that $f\in L^1(X,\R^N)$,
\bn
&& \int_X\int_X \int_{I} \big|\sigma_k \sum_{\ell,m=1}^N f^\ell(x) \overline{\varphi_k^\ell(x)}\varphi_k^m(y) f^m(y)\big| d\nu (k)d\mu(x)d\mu(y) \\
&&
\leq \int_X\int_X \sum_{\ell,m=1}^N \big|f^\ell(x) \big|   \int_{I} \sigma_k \big(\big|\varphi_k^\ell(x)\big|^2+\big|\varphi_k^m(y) \big|^2\big) d\nu (k)\big|f^m(y)\big|d\mu(x)d\mu(y) \\
&&= \int_X\int_X \sum_{\ell,m=1}^N \big|f^\ell(x) \big| \big(K_{\ell \ell }(x,x)+K_{mm}(y,y)\big)  \big|f^m(y)\big|d\mu(x)d\mu(y) \\ 
&&
\leq  2\Big(\sup_{1\leq \ell \leq N} \sup_{x\in X}K_{\ell \ell}(x,x)\Big) \sum_{\ell,m=1}^N\int_X \big|f^\ell(x) \big|d\mu(x) \int_X \big|f^m(y)\big| d\mu(y) \\ 
&&<\infty.
\en
Let $\mathbf{1}_N\in\R^{N\times N}$ denote the matrix of ones in all entries, which is symmetric positive semidefinite as its eigenvalues are $N$ with multiplicity $1$ and $0$ with multiplicity $N-1$. 
Moreover, for every $k\in I$ we define 
\be
g_k:X\to\mathbb{C}^N,\quad g_k(x)\vcentcolon
=\begin{pmatrix} \varphi_k^1(x) f^1(x) \\ \vdots \\ \varphi_k^N(x) f^N(x) \end{pmatrix} \;,
\ee
which is the Hadamard product of $\varphi_k$ and $f$.
Then \eqref{eq:expansion integral} and Fubini's theorem imply 
\be
\begin{aligned}
&\int_X\int_X f(x)^\top K(x,y)f(y)d\mu (x)d\mu (y)\\
&=\int_{I}^{\ } \int_X\int_X  \sigma_k \sum_{\ell,m=1}^N f^\ell(x) \overline{\varphi_k^\ell(x)}\varphi_k^m(y) f^m(y)d\mu(x)d\mu(y)d\nu (k) \\
&=\int_{I}^{\ }\int_X\int_X \sigma_k\ \overline{g_k(x)}^\top \mathbf{1}_N \ g_k(y) d\mu(x)d\mu(y)d\nu (k) \\ 
&=\sum_{k\in I} \sigma_k \Big(\overline{\int_X  g_k(x)d\mu(x)}\Big)^\top  \mathbf{1}_N\  \Big(\int_X g_k(y) d\mu(y)\Big)\geq 0.
\end{aligned}
\ee
This shows the implication \eqref{eq:positive definite kernel real}$\,\Rightarrow\,$\eqref{eq:ipd wrt L1} in the case where $\mu$ is finite.

\textbf{Case 2}: Now assume that $\mu$ is a locally finite measure. 
Then, for every $x\in X$ there exists an open neighborhood $U_x\subset X$ of $x$ with $\mu(U_x)<\infty$. As a separable metric space, $X$ is also a Lindelöf space, that is, every open cover of $X$ has a countable subcover. In particular, there exists a countable subcover of $(U_x)_{x\in X}$, which yields that $\mu$ is $\sigma$-finite.

Let $(X_s)_{s\in\N}$ be an increasing sequence of $\mathcal{B}(X)$-measurable subsets of $X$ with $\mu(X_s)<\infty$ for all $s\geq 1$ and $\bigcup_{s\geq 1}X_s =X$. Let $f\in L^1(X,\R^N)$. Define the sequence of functions $(F_s)_{s\geq 1}$ by
\be
F_s:X\times X\to\R,\quad F_s(x,y)\vcentcolon= \one_{X_s}(x)\one_{X_s}(y)f(x)^\top K(x,y)f(y),\quad \textrm{for } s\geq 1.
\ee
Then $(F_s)_{s\geq 1}$ converges pointwise to the function $f(x)^\top K(x,y)f(y)$ on $X\times X$, since for every $(x,y)\in X\times X$ there exists an $s_0\in\N$ such that $x,y\in X_s$ for all $s\geq s_0$. Moreover, it holds for all $s\geq 1$ that 
\be\begin{aligned}
&\int_X\int_X\big|F_s(x,y)\big|d\mu(x)d\mu(y)\\
&\leq \int_X\int_X\one_{X_s}(x)\one_{X_s}(y)\big|f(x)\big|\big\|K(x,y)\big\|\big|f(y)\big|d\mu(x)d\mu(y)  \\
&\leq \int_X\int_X\big|f(x)\big|\big|f(y)\big|d\mu(x)d\mu(y)\sup\limits_{x,y\in X}  \|K(x,y)\|\\
&<\infty,
\end{aligned}\ee
where we used the submultiplicativity of the Frobenius norm, the integrability of $f$ and the boundedness of $K$ by the hypothesis. Thus dominated convergence can be applied to get
\be\begin{aligned}
\int_X\int_X f(x)^\top K(x,y)f(y)d\mu (x)d\mu (y)
&=\lim_{s\to\infty}\int_X\int_X F_s(x,y)d\mu (x)d\mu (y)\\
&=\lim_{s\to\infty}\int_{X_s}\int_{X_s} f(x)^\top K(x,y)f(y)d\mu (x)d\mu (y)\\
&\geq 0,
\end{aligned}\ee
where the last inequality follows from Case 1, since for all $s\geq 1$ we have $\mu(X_s)<\infty$ and that $(X_s,\mathcal{B}(X_s),\mu|_{X_s})$ and $K|_{X_s\times X_s}$ satisfy the assumptions of Theorem \ref{thm:generalization} and condition \eqref{eq:positive definite kernel real}.

This shows the implication \eqref{eq:positive definite kernel real}$\,\Rightarrow\,$\eqref{eq:ipd wrt L1} in the case where $\mu$ is locally finite and completes the proof.

\end{proof}


\end{document}